\newcommand{\noop}[1]{}
\newcommand{\Q}{\mathbb{Q}}
\newcommand{\R}{\mathbb{R}}
\newcommand{\Z}{\mathbb{Z}}
\renewcommand{\H}{\mathbb{H}}
\newcommand{\GL}{\mathrm{GL}}
\newcommand{\PSL}{\mathrm{PSL}}
\newcommand{\Aut}{\mathrm{Aut}}
\newcommand{\Isom}{\mathrm{Isom}}
\newcommand{\rank}{\mathrm{rank}}
\newcommand{\diag}{\mathrm{diag}}
\newtheorem{theorem}{Theorem}[section]
\newaliascnt{lemma}{theorem}
\newtheorem{lemma}[lemma]{Lemma}
\newaliascnt{cor}{theorem}
\newtheorem{cor}[cor]{Corollary}
\newaliascnt{prop}{theorem}
\newtheorem{prop}[prop]{Proposition}
\newaliascnt{con}{theorem}
\newaliascnt{defn}{theorem}
\theoremstyle{remark}
\newaliascnt{remark}{theorem}
\newaliascnt{claim}{remark}
\newaliascnt{question}{remark}
\def\sek~{\S{}}
\def\equationautorefname~#1\null{(#1)\null}
\numberwithin{equation}{section}
\title{Virtually special embeddings of integral Lorentzian lattices}
\author{Michelle Chu}
\address{Department of Mathematics, Statistics, and Computer Science, University of Illinois at Chicago, Chicago, IL 60607, USA}
\email{michu@uic.edu}
\begin{document}

\begin{abstract}
The automorphism groups of integral Lorentzian lattices act by isometries on hyperbolic space with finite covolume. In the case of reflective integral lattices, the automorphism groups are commensurable to arithmetic hyperbolic reflection groups. However, for a fixed dimension, there is only finitely many reflective integral Lorentzian lattices, and these can only occur in small dimensions.
The goal of this note is to construct embeddings of low-dimensional integral Lorentzian lattices into unimodular Lorentzian lattices associated to right-angled reflection groups. As an application, we construct many discrete groups of $\Isom(\H^n)$ for small $n$ which are C-special in the sense of Haglund-Wise. 
\end{abstract}

\maketitle

\section{Introduction}
Given a finite volume polyhedron $P$ in hyperbolic space $\H^n$, let $\Gamma$ be the group generated by the reflections on the sides of $P$. If the action of $\Gamma$ tiles $\H^n$ without interiors of copies of $P$ overlapping, we say that $\Gamma$ is a \emph{hyperbolic reflection group} and its fundamental polyhedron $P$ is a \emph{hyperbolic Coxeter polyhedron}. The quotient $\H^n/\Gamma$ is a finite-volume hyperbolic orbifold.

The theory of hyperbolic reflection groups provides many examples of finite volume hyperbolic orbifolds. However, in higher dimensions, these cease to exist \cite{MR774946,MR831049,MR842588}. 
%Indeed, hyperbolic Coxeter polyhedra do not exist in dimensions beyond 995 {\blue [CITE Prokhorov 1987, Khovanskii 1986]} (dimension 29 if compact {\blue [CITE Vinberg 1985]}) and examples are only known up to dimension 21 {\blue [CITE Borcherds 1998]} (dimension 8 if compact {\blue [CITE Bugaenko 1992]}).
Another way to construct finite volume hyperbolic orbifolds in any dimension is as quotients of $H^n$ by the automorphism groups $\Aut(L)$ of Lorentzian lattices $L$. These automorphism groups are examples of arithmetic groups of simplest type in $\Isom(\H^n)$.
If the subgroup generated by reflections has finite index in $\Aut(L)$, we say the lattice $L$ is \emph{reflective}. Such a subgroup is an \emph{arithmetic hyperbolic reflection group}. 

In this note we construct embeddings of lattices into unimodular lattices of higher dimension. The Lorentzian unimodular lattices $\mathbf{I}_{n,1}$ are reflective for $2\leq n\leq 19$ \cite{MR0295193,MR0476640}. Furthermore, for $2\leq n\leq 8$, these are associated to reflection groups of hyperbolic right-angled polyhedra, which are geometric right-angled Coxeter groups \cite{MR2130566}. Right-angled Coxeter groups, or RACGs, are particularly interesting because they have many nice properties which are inherited by their subgroups. For example, virtually embedding hyperbolic 3-manifold groups into RACGs has determined the virtual Haken and the virtual fibering conjectures for all finite volume hyperbolic 3-manifolds as well as LERFness of their fundamental groups \cite{virtual_haken,wise_manuscript}. In the sense of \cite{haglund_wise} we say a group is \emph{C-special} if it embeds as a quasi-convex subgroup of a RACG. 

We apply the lattice embeddings together with the explicit relationship between the unimodular lattices $\mathbf{I}_{n,1}$ and RACGs given by \cite{ERT} to construct many examples of C-special hyperbolic manifold groups in dimension 3 and 4. The following theorem extends and improves the results in \cite{bianchi} (see also \cite{thesis}) and \cite[Theorem 2.6]{DMP}.

\begin{theorem}\label{th: special}
Let $\Gamma$ be an integral arithmetic group of simplest type in $\Isom(H^3)$ or $\Isom(H^4)$. Then $\Gamma_{(2)}$, the principal congruence subgroup of level 2, is compact C-special.
\end{theorem}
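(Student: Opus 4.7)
The strategy is to use the lattice embedding construction of this paper together with \cite{ERT} to realize $\Gamma_{(2)}$ as a quasi-convex subgroup of a right-angled Coxeter group, and then extract compact C-specialness geometrically from the action on the associated Davis cube complex.

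Let $L$ be an integral Lorentzian lattice of signature $(n,1)$ with $n \in \{3,4\}$ whose automorphism group is commensurable with $\Gamma$. By the embedding theorem of this paper, there is a primitive isometric embedding $L \hookrightarrow \mathbf{I}_{m,1}$ for some $m \leq 8$, which realizes $\H^n$ as a totally geodesic subspace of $\H^m$. An element of $\Aut(L)_{(2)}$, extended by the identity on $L^\perp$, is an automorphism of $L \oplus L^\perp$ that is trivial modulo $2$; since the overlattice quotient $\mathbf{I}_{m,1}/(L \oplus L^\perp)$ is a $2$-torsion group, this extends to an element of $\Aut(\mathbf{I}_{m,1})_{(2)}$. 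By \cite{ERT}, for $m \leq 8$ the group $\Aut(\mathbf{I}_{m,1})_{(2)}$ is a right-angled Coxeter group $W_m$ acting as a reflection group on a right-angled Coxeter polyhedron in $\H^m$. This yields an injection $\Gamma_{(2)} \hookrightarrow W_m$.

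Next I would consider the Davis cube complex $\mathcal{D}_m$ of $W_m$. The image of $\Gamma_{(2)}$ preserves $\H^n$ and therefore stabilizes the convex subcomplex of $\mathcal{D}_m$ spanned by the cubes dual to the hyperplanes crossing $\H^n$. By the Haglund--Wise criterion, stabilizers of convex subcomplexes in a RACG are quasi-convex, giving that $\Gamma_{(2)}$ is C-special. Torsion-freeness of the principal congruence subgroup at level $2$ follows from a standard Minkowski-type argument on reduction modulo $2$, so the action on $\mathcal{D}_m$ is free. Cocompactness of $\Gamma_{(2)}$ on the convex subcomplex --- which is what upgrades C-special to \emph{compact} C-special --- follows because $\Gamma_{(2)}$ acts with finite covolume on $\H^n$ and, where cusps are present, each peripheral subgroup preserves a horospherical cube subcomplex of $\mathcal{D}_m$ with compact quotient.

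The main obstacle is verifying cocompactness of the action on the convex subcomplex in the presence of cusps. By Meyer's theorem, every rational form of signature $(n,1)$ with $n \geq 4$ is isotropic, so non-cocompactness is unavoidable in dimension $4$ and common in dimension $3$; one must check case by case that the peripheral subgroups of $\Gamma_{(2)}$ --- virtually $\Z^2$ or $\Z^3$ --- act cocompactly on explicit horospherical subcomplexes of $\mathcal{D}_m$. A secondary technical point is the well-definedness of the extension map $\Aut(L)_{(2)} \to \Aut(\mathbf{I}_{m,1})_{(2)}$, which is where primitivity of the embedding $L \hookrightarrow \mathbf{I}_{m,1}$ and the restriction to level exactly $2$ are essential.
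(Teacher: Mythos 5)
Your overall route --- embed $L$ into a unimodular Lorentzian lattice of small dimension, push $\Aut(L)_{(2)}$ into the level-$2$ congruence subgroup there, and invoke \cite{ERT} --- is exactly the paper's strategy, but the step where you transfer the congruence condition contains a genuine error. You justify the containment $\Aut(L)_{(2)}\subseteq\Aut(\mathbf{I}_{m,1})_{(2)}$ by asserting that the overlattice quotient $\mathbf{I}_{m,1}/(L\oplus L^\perp)$ is a $2$-torsion group. It is not: by construction this glue group is isomorphic to $\Delta(L)$, which can be an arbitrary finite abelian group (in the worked example of \autoref{sec: example} it is $\Z/7\Z$). Two things therefore need real arguments. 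First, $\gamma\oplus\mathrm{id}_{L^\perp}$ need not even preserve the overlattice, since $\gamma\in\Aut(L)_{(2)}$ may act nontrivially on $\Delta(L)$ and hence move the glue group; in the example one must extend by $-I_2\oplus I_1$ on $L^\perp$ rather than by the identity when $\gamma$ acts by $-1$ on $\Delta(L)$. Second, granting that a suitable extension preserves $\mathbf{I}_{m,1}$, one must still check that it is trivial mod $2$ on the glue vectors $u$ with $Nu=u'\in L\oplus L^\perp$ for an integer $N>1$ that need not be a power of $2$; this is the content of \autoref{lem: cong2 embeds}, which shows $(u'\gamma,v)\equiv 0$ and $(u'\gamma,u')\equiv(u',u')$ modulo $2N$ and deduces $u\gamma\equiv u\bmod 2$. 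Your ``$2$-torsion'' shortcut would only cover lattices whose determinant is a power of $2$.

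A second, smaller gap: you assert the embedding lands in $\mathbf{I}_{m,1}$ with $m\leq 8$, but \autoref{th: unimodular embedding} increases the positive index by $\max\{\rank(\Delta(L))+1,3\}$, which for a $5$-dimensional lattice could a priori be as large as $6$, landing you in $\mathbf{I}_{10,1}$, outside the range of \cite{ERT}. The paper gets $m=6$ or $7$ by first replacing $L$ with an SSF lattice whose automorphism group contains $\Aut(L)$ (via Watson's transformations) and observing that SSF lattices of dimension at most $5$ have $\rank(\Delta(L))\leq 2$; this reduction is missing from your argument. Your geometric endgame (the convex subcomplex of the Davis complex dual to hyperplanes crossing $\H^n$, cocompactness near cusps) is a reasonable expansion of what the paper compresses into the phrase ``geometrically finite subgroup'' in \autoref{cor: aut embedding}, and is not where the essential difficulty lies.
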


For fixed dimensions, the indices of the principal congruence subgroups of level 2 contained in integral arithmetic group of simplest type are uniformly bounded. \autoref{th: special} extends \cite[Theorem 1.2, Proposition 5.3]{bianchi}\cite[Theorem 3.1, Proposition 4.3, Proposition 5.3]{thesis} and also improves on the bounds for the value of $D$ found in \cite[Proposition 2.6]{DMP} by removing the dependence on the discriminant. This gives a uniform bound for $D$ which is independent of anything to get a strengthening of \cite[Theorem 2.2]{DMP}. Prior to these results, Bergeron-Haglund-Wise showed that given an arithmetic group of simplest type in $\mathrm{O}^+(n,1)$, there exist some $m$ such that the congruence subgroup of level $m$ is special \cite{BHW}, but \autoref{th: special} shows that $m=2$ is enough for the cases included in \autoref{th: special}.

As a consequence of \autoref{th: special} we can give a slight improvement to the results in \cite{bianchi}.

\begin{prop}\label{prop: bianchi} Let $d$ be a square-free positive integer and $\mathcal{O}_d$ the ring of integers in the quadratic imaginary field $\Q(\sqrt{-d})$.
The principal congruence subgroup $\PSL(2,\mathcal{O}_d)_{(2)}$ embeds in a RACG and has index
\begin{equation*}
[\PSL(2,\mathcal{O}_d): \PSL(2,\mathcal{O}_d)_{(2)}]= \begin{cases}
	48 & \text{if }d\equiv 1,2 \mod (4) \\
	60 & \text{if }d\equiv 3 \mod (8) \\
	36 & \text{if }d\equiv 7 \mod (8) .
\end{cases}
\end{equation*}
\end{prop}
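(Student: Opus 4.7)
The plan is to combine \autoref{th: special} (in dimension three) with the standard mod-$2$ index computation for Bianchi groups. The first step is to observe that, viewed as a lattice in $\PSL(2,\C)\cong\Isom^+(\H^3)$, $\PSL(2,\mathcal{O}_d)$ comes from an integral arithmetic group of simplest type: up to commensurability, $\SL(2,\mathcal{O}_d)$ is the spin cover of the orthogonal group of a rational quadratic form of signature $(3,1)$ that becomes isotropic over $\Q(\sqrt{-d})$. With this identification \autoref{th: special} applies directly and yields that $\PSL(2,\mathcal{O}_d)_{(2)}$ is C-special, which in particular means it embeds as a quasiconvex subgroup of a RACG.

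For the three index formulas I would use the short exact sequence
\[
1 \longrightarrow \SL(2,\mathcal{O}_d)_{(2)} \longrightarrow \SL(2,\mathcal{O}_d) \longrightarrow \SL(2,\mathcal{O}_d/2\mathcal{O}_d) \longrightarrow 1,
\]
surjectivity of the reduction map following from strong approximation (or more concretely from the fact that $\SL(2,\mathcal{O}_d/2\mathcal{O}_d)$ is generated by the images of elementary matrices). Since $-I\equiv I\pmod 2$, the center $\{\pm I\}$ lies inside $\SL(2,\mathcal{O}_d)_{(2)}$, so projectivizing does not alter the index and
\[
[\PSL(2,\mathcal{O}_d):\PSL(2,\mathcal{O}_d)_{(2)}]=|\SL(2,\mathcal{O}_d/2\mathcal{O}_d)|.
\]

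The three cases of the proposition correspond precisely to the three splitting types of the rational prime $2$ in $\mathcal{O}_d$. For $d\equiv 1,2\pmod 4$ the prime $2$ is ramified and $\mathcal{O}_d/2\mathcal{O}_d\cong \F_2[\epsilon]/(\epsilon^2)$, a local ring of order $4$; the kernel of reduction modulo the nilpotent $\epsilon$ inside $\SL(2,\cdot)$ is cut out by the single equation $\operatorname{tr}=0$ and has order $2^3=8$, so $|\SL(2,\mathcal{O}_d/2\mathcal{O}_d)|=6\cdot 8=48$. For $d\equiv 3\pmod 8$ the prime $2$ is inert, $\mathcal{O}_d/2\mathcal{O}_d\cong \F_4$, and the standard formula gives $|\SL(2,\F_4)|=4(4^2-1)=60$. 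For $d\equiv 7\pmod 8$ the prime $2$ splits, $\mathcal{O}_d/2\mathcal{O}_d\cong \F_2\times \F_2$, and $|\SL(2,\F_2)|^2=36$.

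The only real obstacle is the first step: one must check that the Bianchi groups genuinely satisfy the hypothesis ``integral arithmetic group of simplest type'' of \autoref{th: special} as literally formulated. This amounts to the classical commensurability between Bianchi groups and rational orthogonal groups of signature $(3,1)$ with an isotropic vector, and once it is in hand the remainder of the argument is a short bookkeeping exercise counting matrices over the finite ring $\mathcal{O}_d/2\mathcal{O}_d$.
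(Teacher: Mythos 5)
Your index computation is correct and complete --- the paper simply cites Dickson for the same three values, so working out $|\SL(2,\mathcal{O}_d/2\mathcal{O}_d)|$ case by case according to how $2$ splits in $\mathcal{O}_d$ is a legitimate and more self-contained way to obtain that half of the statement. The overall strategy, embedding the Bianchi group into an integral orthogonal group and invoking \autoref{th: special}, is also the paper's.

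The gap is in the first step. Commensurability with a rational orthogonal group is not enough to apply \autoref{th: special}: as the paper defines it, an integral arithmetic group of simplest type is $\Aut(L)=\mathrm{O}(f_L;\Z)$ or a finite-index subgroup thereof, so you need an honest inclusion $\PSL(2,\mathcal{O}_d)\hookrightarrow \mathrm{O}^+(f_d;\Z)$ for an explicit integral form $f_d$ of signature $(3,1)$; the paper uses $f_d=2x_0x_1+2x_2^2+2dx_3^2$ for $d\equiv1,2\pmod4$ and $f_d=2x_0x_1+2x_2^2+2x_2x_3+\tfrac{d+1}{2}x_3^2$ for $d\equiv3\pmod4$, following Jones--Mennicke. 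More seriously, even with such an inclusion in hand, \autoref{th: special} gives specialness of the level-$2$ congruence subgroup of $\mathrm{O}^+(f_d;\Z)$, i.e.\ the $4\times4$ integer matrices congruent to $I$ mod $2$, whereas the proposition concerns $\PSL(2,\mathcal{O}_d)_{(2)}$, defined by a mod-$2$ condition on $2\times2$ matrices over $\mathcal{O}_d$. These are a priori different congruence conditions, and the substantive point --- which your write-up does not address --- is that the image of $\PSL(2,\mathcal{O}_d)_{(2)}$ under the (quadratic, adjoint-type) embedding actually lands inside $\mathrm{O}^+(f_d;\Z)_{(2)}$. This is exactly what the paper verifies via the explicit matrix form of the embedding; without it, ``applies directly'' does not yield the group named in the proposition, only $\PSL(2,\mathcal{O}_d)\cap\mathrm{O}^+(f_d;\Z)_{(2)}$. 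Once that containment is established, your reduction-mod-$2$ count finishes the proof.
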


In particular, since the figure eight knot group intersects $\PSL(2,\mathcal{O}_3)_{(2)}$ in a subgroup of index 10, we also get the following corollary.

\begin{cor}
The figure eight knot complement has a special cover of degree 10.
\end{cor}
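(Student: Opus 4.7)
The plan is as follows. By Riley's classical realization, the figure-eight knot group $\Gamma_8$ sits as a finite-index subgroup of the Bianchi group $\PSL(2,\mathcal{O}_3)$, generated by $\begin{pmatrix}1&1\\0&1\end{pmatrix}$ and $\begin{pmatrix}1&0\\-\omega&1\end{pmatrix}$ with $\omega = (-1+\sqrt{-3})/2 \in \mathcal{O}_3$. Set $H := \Gamma_8 \cap \PSL(2,\mathcal{O}_3)_{(2)}$. By the hypothesis stated in the paragraph preceding the corollary, $[\Gamma_8:H]=10$, so $H$ is the fundamental group of a degree-$10$ cover of the figure-eight knot complement $M_8$. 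It suffices to show that $H$ is C-special.

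This I would deduce directly from \autoref{th: special}: since $\PSL(2,\mathcal{O}_3)$ is an integral arithmetic group of simplest type in $\Isom(\H^3)$, that theorem yields that $\PSL(2,\mathcal{O}_3)_{(2)}$ is compact C-special, i.e.\ embeds as a quasi-convex subgroup of some RACG $W$. Both $H$ and $\PSL(2,\mathcal{O}_3)_{(2)}$ are lattices in $\Isom(\H^3)$ with $H \leq \PSL(2,\mathcal{O}_3)_{(2)}$, so a covolume comparison forces $[\PSL(2,\mathcal{O}_3)_{(2)}:H] < \infty$. Since finite-index subgroups of quasi-convex subgroups of RACGs remain quasi-convex in the ambient RACG, $H$ is quasi-convex in $W$, and hence C-special. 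Consequently, the degree-$10$ cover of $M_8$ corresponding to $H$ is a special cover.

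The whole argument is formal once \autoref{th: special} is in hand, so there is no real obstacle; the only external input is the hypothesized index identity $[\Gamma_8 : \Gamma_8 \cap \PSL(2,\mathcal{O}_3)_{(2)}]=10$, which is a direct coset count carried out inside the finite quotient $\PSL(2,\mathcal{O}_3)/\PSL(2,\mathcal{O}_3)_{(2)}$ using Riley's explicit generators for $\Gamma_8$.
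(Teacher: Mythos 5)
Your argument is correct and is essentially the paper's own: the paper deduces the corollary in one line from \autoref{prop: bianchi} (that $\PSL(2,\mathcal{O}_3)_{(2)}$ embeds in a RACG, via \autoref{th: special}) together with the asserted fact that the figure-eight knot group meets $\PSL(2,\mathcal{O}_3)_{(2)}$ in a subgroup of index $10$. Your additional remarks---the covolume comparison giving finite index and the persistence of quasi-convexity under passage to finite-index subgroups---merely make explicit steps the paper leaves implicit.
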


This note is organized as follows:
In \autoref{sec: prelim} we give the necessary preliminary background in integral lattices, their automorphism groups, and arithmetic groups of simplest type.
In \autoref{sec: gluing}, inspired by the lattice gluings in \cite{Allcock18}, we construct embeddings of integral lattices into unimodular lattices. In \autoref{sec: special} we use these embeddings to prove \autoref{th: special}. Finally, in \autoref{sec: example} we give an explicit example.
%Finally, in \autoref{sec: example} we explicitly describe an example of the construction and gluing described in \autoref{sec: gluing}.

%%%%%%%%%%%%%%% %%%%%%%%%%%%%%% %%%%%%%%%%%%%%%

\section*{Acknowledgments}
The author thanks Daniel Allcock for introducing her to lattice gluing and encouraging this work. This work was supported by NSF grant DMS-1803094. %She also thanks Alan Reid for helpful comments. 

%%%%%%%%%%%%%%% %%%%%%%%%%%%%%% %%%%%%%%%%%%%%%

%%%%%%%%%%%%%%% %%%%%%%%%%%%%%% %%%%%%%%%%%%%%%

\section{Preliminaries}\label{sec: prelim}

\subsection{Integral lattices}

A \emph{lattice} $L$ is a $\Z$-module equipped with a $\Q$-valued non-degenerate symmetric bilinear form $(\cdot,\cdot)$ on the vector space $V=L\otimes\Q$, called the \emph{inner product}. $L$ is called \emph{Lorentzian} if its inner product has signature $+^n-^1$.
The \emph{norm} of a vector $v$ is its inner product with itself $(v,v)$.
If the inner product of every pair of vectors in $L$ is $\Z$-valued, $L$ is called \emph{integral}. In what follows, let $L$ be an integral lattice unless otherwise noted.

The dual of $L$ is the lattice $L^*=\{v\in L\otimes\Q: (v,L)\in\Z \}$. Notice that $L$ is integral if and only if $L\subset L^*$. We define the \emph{discriminant group} as $\Delta(L)=L^*/L$, a finite abelian group. We will refer to the minimal number of generators of $\Delta(L)$ as $\rank(\Delta(L))$.

The \emph{determinant} of $L$, or $\det L$ is the determinant of an inner product matrix $A_L$, with respect to some $\Z$-basis of $L$. It is independent of the choice of $\Z$-basis and in fact $|\det L|=|\Delta(L)|$. If $L'$ is a sublattice of $L$ of index $d$ then $\det L'=d^2\cdot\det L$.

The $\Z$-valued inner product on $L$ extends to a $\Q$-valued inner product on $L^*$ and descends to a $\Q/\Z$-valued inner product on $\Delta(L)$.

An integral lattice $L$ is called \emph{strongly-square-free}, denoted by \emph{SSF}, if the rank of $\Delta(L)$ is at most $\frac{1}{2}\dim(L)$ and every invariant factor of $\Delta(L)$ is square-free. In other words, $\Delta(L)$ is a direct product of at most $\frac{1}{2}\dim(L)$-many finite cyclic subgroups, each of square-free order.
An integral lattice is called \emph{unimodular} whenever $\Delta(L)$ is trivial. 

Lattices may also be defined more generally over totally real number fields.

\subsection{Automorphisms and arithmetic groups}
Let $k$ be a totally real number field with ring of integers $\mathcal{O}_k$ and let $f$ be a quadratic form of signature $+^n-^1$ defined over $k$ such that for every non-identity embedding $\sigma:k\hookrightarrow\R$, the form $f^\sigma$ is positive definite. Let $\mathrm{O}(f;\R)$ denote the orthogonal group that preserves $f$ and $\mathrm{O}^+(f;\R)$ its index-two subgroup which is time-orientation preserving. 
Then the group $\Gamma=\mathrm{O}^+(f;\mathcal{O}_k) =\mathrm{O}^+(f;\R)\cap\GL_{n+1}(\mathcal{O}_k)$ is a finite-covolume discrete subgroup of $\mathrm{O}^+(f;\R)$ which is identified with $\Isom(\H^n)$ via the hyperboloid model.
The field $k$ is called the \emph{field of definition} for $\Gamma$.
Any discrete subgroup of $\Isom(\H^n)$ which is commensurable to some such $\mathrm{O}^+(f;\mathcal{O}_k)$ is called \emph{arithmetic of simplest type}.

%The general construction of arithmetic subgroups of semisimple Lie groups uses algebraic groups. It turns out that all arithmetic subgroups of isometries of even dimensional hyperbolic spaces are of simplest type. For odd dimensional hyperbolic spaces, there is another type of arithmetic subgroups which are constructed using quaternion algebras (in dimension 7 there is yet another type).

If $L$ is an integral Lorentzian lattice, then its \emph{automorphism group} is the group
\begin{align*}
\Aut(L)
& =\{ g\in\GL(V) | Lg=L \text{ and } f_L(xg,yg)=f_L(x,y) \text{ for all }x,y\in L \} \\
& = \{ g\in\GL_{n+1}(\Z) | gA_Lg^{tr}=A_L \} \\
& =\mathrm{O}(f_L;\Z).
\end{align*}
We call $\Aut(L)$ and any finite index subgroup of it an \emph{integral arithmetic group of simplest type}. 

The principal congruence subgroup of level $m$ in $\Aut(L)$ is the subgroup
\begin{align*}
\Aut(L)_{(m)}
& = \{ g\in\GL(V) | vg\equiv v \mod m \text{ for all }x\in L \}  \\
& = \{ g\in\GL_{n+1}(\Z) | gA_Lg^{tr}=A_L \text{ and } g\equiv I_{n+1} \mod m \} \\
& =\mathrm{O}(f_L;\Z)_{(m)}
\end{align*}
where $I_{n+1}$ is the $(n+1)\times(n+1)$ identity matrix.

%\subsection{Reflective lattices}

If $n-1$ is not divisible by 8, there is, up to isomorphism over $\Z$, a unique unimodular Lorentzian lattice of signature $+^n-^1$ denoted $\mathbf{I}_{n,1}$.
Let $q_n$ be the standard Lorentzian quadratic form 
\begin{equation}\label{eq: Fn} q_n:=-x_0^2+ x_1^2 + \dots + x_n^2. 
\end{equation}
The unimodular lattice $\mathbf{I}_{n,1}$ has automorphism group $
Aut(\mathbf{I}_{n,1})=\mathrm{O}(q_n,\Z)$ and is reflective for $n\leq 19$ \cite{MR0295193,MR0476640}.

\subsection{Invariants and existence of integral lattices}
This section assumes familiarity with Conway-Sloane $p$-adic symbols \cite[Chapter 15]{ConwaySloane}.

Over the $p$-adic integers, a form $f$ associated to a $p-adic$ lattice $L_p$ can be decomposed as a direct sum
\begin{equation}
f = f_1\oplus p f_p \oplus p^2f_{p^2} \oplus\dots\oplus q f_{q} \oplus\dots
\end{equation}
where $q$ is a $p$-power and $f_q$ is a $p$-adic integral form with determinant prime to $p$. This Jordan decomposition is unique for odd primes.

For $p$ odd, the $p$-adic symbol of $f$ is the formal product of factors $q^{\epsilon_q n_q}$ with
\begin{equation*}
\epsilon_q = \left( \frac{\det f_q}{p} \right)
\text{ and } 
n_q = \dim f_q
\end{equation*}
where $\left( \frac{\alpha}{p} \right)$ denotes the Kronecker symbol.

For $p=2$, the $2$-adic symbol of $f$ is the formal product of factors $q^{\epsilon_q n_q}_{t_q}$ or $q^{\epsilon_q n_q}$ where the former indicates $f_q$ is of type I and the later indicates $f_q$ is of type II and with
\begin{equation*}
\epsilon_q = \left( \frac{\det f_q}{2} \right)
\text{ , } 
n_q = \dim f_q
\text{ and } 
t_q = \mathrm{oddity}(f_q)
\end{equation*}
where the Kronecker symbol $\left( \frac{a}{2} \right)$ is $+1$ if $a\equiv\pm1\mod 8$ or $-1$ if $a\equiv\pm3\mod 8$.

Unfortunately, the 2-adic symbol is not unique, since a $2$-adic form can have essentially different Jordan decompositions. However, Conway-Sloane define an abreviated $2$-adic symbol using compartments and trains. Two abreviated $2$-adic symbols represent the same form if and only they are related by sign walking see \cite[Chapter 15, \S7.5]{ConwaySloane}.

By \cite[Theorem 11, Chapter 15]{ConwaySloane}, there exist an integral lattice $L$ of determinant $d$ having specified local forms $L_p$ and signature $+^r-^s$ if and only if the determinant condition, the oddity formula, and the Jordan blocks conditions displayed below hold.

\begin{itemize}
\item[1.] The determinant condition: for each $p$, the $\epsilon_q$ from the $p$-adic symbol satisfy
	\begin{equation} \label{eq: det condition}
	\prod \epsilon_q = \left(\frac{a}{p}\right)
	\end{equation}
	where $\det(L)=p^\alpha a$.
\item[2.] The oddity formula:
	\begin{equation} \label{eq: oddity formula}
	\mathrm{signature}(L)+\sum_{p \text{ odd}} p\mathrm{-excess}(L_p) \equiv \mathrm{oddity}(K_2) \mod8
	\end{equation}
	where $$\mathrm{signature}(L)= r-s,$$
	$$p\mathrm{-excess}(L_p)\equiv\sum_q n_q(q-1)+4\cdot\#(\text{odd powers $q$ with $\epsilon_q=-1$}),$$ 
	$$\text{and } \mathrm{oddity}(L_2)=\sum t_q+4\cdot\#(\text{odd powers $q$ with $\epsilon_q=-1$}) . $$
\item[3.] The Jordan blocks conditions:
the $2$-adic Jordan blocks satisfy the following
	\begin{equation} \label{eq: Jc cond 1}
 	\text{if type II, } t_q\equiv 0\mod 8
 	\end{equation}
 	\begin{equation} \label{eq: Jc cond 2}
 	\text{if }n_q=1, 
 	\begin{cases}
 	\epsilon_q=+1 \text{ then } t_q\equiv\pm 1\mod 8\\
 	\epsilon_q=-1 \text{ then } t_q\equiv\pm 3\mod 8
 	\end{cases}
 	\end{equation}
 	\begin{equation} \label{eq: Jc cond 3}
 	\text{if type I and }n_q=2,
 	\begin{cases}
 	\epsilon_q=+1 \text{ then } t_q\equiv 0 \text{ or }\pm2\mod 8\\
 	\epsilon_q=-1 \text{ then } t_q\equiv 4 \text{ or }\pm2\mod 8
 	\end{cases}
 	\end{equation}
 	\begin{equation} \label{eq: Jc cond 4}
 	\text{and } t_q\equiv n_q\mod 2.
 	\end{equation}
\end{itemize}

When working with the abbreviated $2$-adic symbol, the Jordan blocks conditions on a compartment of total dimension at least $3$ reduce to just one condition: the total oddity in the compartment has the same parity as its total dimension.

\subsection{Some facts and observations}
We state here some observations. 

For odd $p$, the $p$-excess is always even.

If $L$ is SSF, then since the rank of any invariant factor is square-free, the $p$-adic symbol for $L_p$ will only contains terms for $q=1$ and $q=p$. Furthermore, since the rank of $\Delta(L)$ is at most $\frac{1}{2}\dim(L)$, also $n_p\leq \frac{1}{2}\dim(L)$.

If we take $L^{\mathrm{neg}}$ to be as $L$ with all inner products negated, its local forms will change as follows.
If $p$ is odd, then the $p$-adic symbol for $L^{\mathrm{neg}}_p$ is got from that of $L_p$ by multiplying each superscript by $\left(\frac{(-1)^{n_q}}{p}\right)$. The $2$-adic symbol for $L^{\mathrm{neg}}_p$ is got from that of $L_p$ by negating each subscript.

If $p\nmid \det L$ or if $\left(\frac{-1}{p}\right)=1$, then the $p$-excess of $L_p$ and $L_p^{\mathrm{neg}}$ agree. If $p|L$ and $\left(\frac{-1}{p}\right)=1$, then the $p$-excess of $L_p$ and $L_p^{\mathrm{neg}}$ differ by $4\mod8$.

%%%%%%%%%%%%%%% %%%%%%%%%%%%%%% %%%%%%%%%%%%%%%

%%%%%%%%%%%%%%% %%%%%%%%%%%%%%% %%%%%%%%%%%%%%%

\section{Lattice embeddings}\label{sec: gluing}

The goal of this section is to prove the following proposition. 

\begin{prop}\label{th: unimodular embedding}
Let $L$ be an integral lattice of signature $+^r-^s$  and let $\delta=\rank(\Delta(L))$. Let $m=\max\{\delta+1,3\}$. Then $L$ embeds in a unimodular lattice of signature $+^{r+m}-^s$.
\end{prop}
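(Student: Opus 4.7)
The plan is a lattice gluing argument in the style of Nikulin. The idea is to construct a positive definite integral lattice $M$ of rank $m$ whose discriminant form on $\Delta(M)$ is isomorphic to the negative of the discriminant form on $\Delta(L)$, and then glue $L$ to $M$ along this isomorphism. Concretely, an isometry $\phi:\Delta(L)\to\Delta(M)$ that reverses the $\Q/\Z$-valued form determines an overlattice $N\subset L^*\oplus M^*$ containing $L\oplus M$, namely $N=\{(v,w):\phi(\bar v)=-\bar w\}$. The quotient $N/(L\oplus M)$ is the graph of $\phi$, so its order squares to $|\Delta(L)|\cdot|\Delta(M)|$ and a standard index--determinant computation forces $|\det N|=1$. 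Hence $N$ is unimodular of signature $+^{r+m}-^s$ and contains $L$ as required.

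The task therefore reduces to producing $M$. I would specify its $p$-adic invariants prime by prime and invoke the existence criterion \cite[Theorem 11, Chapter 15]{ConwaySloane} recalled in \autoref{sec: prelim}. At each prime $p$ dividing $|\det L|$, take the nontrivial Jordan factors of $M_p$ to match those of $L_p$ in dimension, but with Kronecker signs (and, for $p=2$, oddity subscripts) modified according to the negation recipe stated at the end of \autoref{sec: prelim}, so that the induced discriminant form on $\Delta(M)_p$ is the negative of that on $\Delta(L)_p$. The remaining dimension in $M_p$ is padded out with unit $1$-blocks. At primes not dividing $|\det L|$, the symbol of $M_p$ is all units. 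Since $\rank(\Delta(L)_p)\leq\delta$ and $m\geq\delta+1$, at every prime there is at least one free unit block whose sign $\epsilon_1$ (and, at $p=2$, oddity $t_1$ and type) may be toggled.

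With this freedom I would check the three conditions in turn. The determinant condition \eqref{eq: det condition} at each $p$ can be achieved by toggling the sign of the free unit block at that prime. At $p=2$, the bound $m\geq 3$ ensures the $2$-adic compartment has total dimension at least three, so the Jordan block conditions \eqref{eq: Jc cond 1}--\eqref{eq: Jc cond 4} collapse to the single parity condition that the total oddity equals the total dimension mod $2$, which is arranged by choosing the parity of $t_1$ for the unit block. The global oddity formula \eqref{eq: oddity formula} couples all primes: the signature contribution is the fixed value $m$, the $p$-excesses at odd primes are determined (up to $4\bmod 8$, via the free unit sign) by the already chosen symbols, and the remaining $\bmod 8$ discrepancy is absorbed by shifting the oddity of the $2$-adic unit block, together with sign-walking and a possible type I/II swap.

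The main obstacle is coordinating these local choices: the determinant, Jordan, and oddity constraints are not independent, and the whole argument turns on having enough unused unit blocks to move signs and oddities around without disturbing the discriminant form. This is exactly why the hypothesis $m=\max\{\delta+1,3\}$ appears: the summand $\delta+1$ provides one free unit block beyond the discriminant-carrying Jordan blocks at each bad prime, and the lower bound $3$ provides both the compartment size needed for the Jordan block conditions to simplify and enough room at $p=2$ to satisfy the oddity formula. Once $M$ with the prescribed local data exists, the gluing in the first paragraph produces the desired unimodular overlattice.
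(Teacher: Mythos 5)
Your proposal is correct and follows essentially the same route as the paper: both construct a positive definite rank-$m$ lattice by prescribing Conway--Sloane local symbols that negate the discriminant form of $L$ at the bad primes, pad with unit blocks (using $m\geq\delta+1$ for a free block and $m\geq 3$ to simplify the $2$-adic Jordan conditions and absorb the oddity formula), and then glue along the graph of the anti-isometry of discriminant groups to obtain a unimodular overlattice. The index--determinant computation and the role of the two lower bounds on $m$ match the paper's argument.
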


The proof will be separated into two cases depending on  the parity of $\det(L)$. The main idea is to use a technique in \cite{Allcock18}. We will construct a lattice $K$ of signature $+^{\delta+1}-^0$ with $\det(K)=(-1)^s\det(L)$ by specifying its local forms $K_p$, chosen such that there exist a group isomorphism $\phi:\Delta(L)\rightarrow\Delta(K)$ which negates norms and inner products. Gluing $L$ to $K$ along the graph of $\phi$ will then result in a unimodular lattice.

\subsection{Case 1: \texorpdfstring{$d$}{d} is odd}

If $\det(L)$ is odd, the following lemma holds regardless of whether $L$ is SSF.

\begin{lemma}\label{lem: d odd}
Let $L$ be an integral lattice of signature $+^r-^s$ with $\det(L)$ odd and $\rank(\Delta(L))=\delta$. Let $m=\max\{\delta+1,3\}$. Then $L$ embeds in a unimodular lattice of signature $+^{r+m}-^s$.
\end{lemma}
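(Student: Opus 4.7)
The plan is to construct a positive-definite $\Z$-lattice $K$ of rank $m$ with $\det(K)=(-1)^s\det(L)$, together with an isomorphism $\phi:\Delta(L)\to\Delta(K)$ of finite quadratic forms that negates norms and inner products. Given $K$ and $\phi$, set
\[
M=\bigl\{(v,w)\in L^*\oplus K^*:(\bar v,\bar w)\in\mathrm{Gr}(\phi)\bigr\},
\]
where $\mathrm{Gr}(\phi)=\{(\bar v,\phi(\bar v)):\bar v\in\Delta(L)\}$ is an isotropic subgroup of $\Delta(L)\oplus\Delta(K)$. Because $\phi$ negates the discriminant form, $M$ is integral; it contains $L\oplus K$ with index $|\Delta(L)|=|\Delta(K)|$, so $\det(M)=\det(L)\det(K)/|\Delta(L)|^2=(-1)^s$. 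Thus $M$ is a unimodular lattice of signature $+^{r+m}-^s$ that contains $L$, as required.

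To construct $K$, I specify its local forms and apply \cite[Chapter 15, Theorem 11]{ConwaySloane}. For each odd prime $p\mid\det(L)$, take the $p$-adic symbol of $K_p$ to have the same block ranks $n_q$ as $L_p$ for every $p$-power $q>1$, but with $\epsilon_q(K_p)=\left(\frac{(-1)^{n_q}}{p}\right)\epsilon_q(L_p)$; by the observations at the end of \autoref{sec: prelim}, this makes the $p$-part of $\Delta(K)$ isomorphic to the negation of the $p$-part of $\Delta(L)$ as a $\Q/\Z$-valued form. The $q=1$ block of $K_p$ has rank $m-\delta_p\ge 1$, where $\delta_p$ is the rank of the $p$-primary part of $\Delta(L)$, since $m\ge\delta+1\ge\delta_p+1$; set $\epsilon_1(K_p)$ to satisfy the determinant condition \autoref{eq: det condition} at $p$. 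At odd primes $p\nmid\det(L)$, $K_p$ is unimodular and $\epsilon_1(K_p)$ is determined by the same condition. Because $\det(L)$ is odd, $K_2$ is unimodular of rank $m$ with $\epsilon_1(K_2)$ dictated by the determinant condition at $2$.

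The final ingredient is the oddity formula \autoref{eq: oddity formula}, which pins down $\mathrm{oddity}(K_2)\pmod 8$ once all other invariants are fixed. Both sides of this congruence have parity $m$: the left side because $\mathrm{signature}(K)=m$ and each $p$-excess is even for $p$ odd, and the right side because condition \autoref{eq: Jc cond 4} forces the oddity of a rank-$m$ unimodular 2-adic block to be $\equiv m\pmod 2$. Since $m\ge 3$, a direct case check shows that a positive-definite rank-$m$ unimodular 2-adic form with any prescribed $\epsilon_1$ and any oddity of the correct parity can be realized, so the required oddity is achievable and the existence theorem produces $K$. The 2-parts of $\Delta(L)$ and $\Delta(K)$ are both trivial, and the $p$-parts match as negated forms by construction, so a $\phi$ with the required properties exists. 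The main obstacle is threading all of these local constraints simultaneously; the bound $m=\max\{\delta+1,3\}$ is exactly what is needed, with the ``$+1$'' giving a free unimodular direction at each odd $p\mid\det(L)$ to absorb the determinant condition, and the ``$3$'' providing enough 2-adic flexibility to meet the oddity formula.
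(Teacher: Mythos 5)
Your proposal is correct and follows essentially the same route as the paper: construct a positive-definite rank-$m$ lattice $K$ by prescribing local forms whose nontrivial Jordan blocks match those of $L_p^{\mathrm{neg}}$, verify the Conway--Sloane existence conditions (using $m\ge\delta+1$ for the determinant condition and $m\ge 3$ for the 2-adic oddity), and glue $L$ to $K$ along the graph of an anti-isometry of discriminant forms to obtain the unimodular overlattice. Your write-up even makes explicit a couple of points the paper leaves implicit (the parity check in the oddity formula and why the rank-$m$ unimodular 2-adic block can realize the required oddity), so no gaps to report.
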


\begin{proof}
Assume $L$ is not unimodular and let $d:=(-1)^s\det(L)$. Defined the local forms $K_p$ as follows.

For odd $p\nmid d$, define $K_p$ by $1^{(\frac{d}{p})m}$.

For odd $p|d$ with $d=p^\alpha a$, define $K_p$ by the product of $q^{\epsilon_q n_q}$ where for $q=p^m>1$,  the term $q^{\epsilon_q n_q}$ matches that of $L_p^{neg}$, and where $1^{\epsilon_1 n_1}$ is chosen such that $\prod \epsilon_q =(\frac{a}{p})$ and $\sum n_q=m$.

Let $t\equiv m+\sum_{p\text{ odd}}p\mathrm{-excess}(K_p)$. Since $\sum_{p\text{ odd}}p\mathrm{-excess}(K_p)$ is even, $t$ will have the parity of $m$.
Define $K_2$ by $1^{(\frac{d}{2})m}_t$. %(this satisfies \autoref{eq: Jc cond 4} since $m\geq3$).

With these choices of local forms, all conditions \autoref{eq: det condition}-\autoref{eq: Jc cond 4} are satisfied. So there exist an integral lattice $K$ of signature $+^{m}-^0$ and determinant $d$ with the prescribed local forms.

Now each local form $K_p$ differs from $L_p^{neg}$ by a unimodular factor. We have that $\Delta(K_p)$ and $\Delta(L_p^{neg})$ are isomorphic and correspond to the Sylow $p$-subgroups of $\Delta(K)$ and $\Delta(L^{neg})$. It follows that there exist a group isomorphism $\phi:\Delta(L)\rightarrow\Delta(K)$ which negates norms and inner products. Let $G=\{(x,\phi x)\}$ be the graph of $\phi$. Then $G$ is a totally isotropic subgroup of $\Delta(L\oplus K)=\Delta(L)\oplus \Delta(K)$, that is, the natural $\Q/\Z$-valued inner product on $\Delta(L\oplus K)$ vanishes on $G$. 

Write $L\oplus_G K$ for the preimage of $G$ in $(L\oplus K)^*=L^*\oplus K^*$. 
Since $G$ is totally isotropic, $L\oplus_G K$ is an integral lattice containing $L\oplus K$ as a sublattice of index $|G|$ and therefore its determinant is given by
\begin{equation}
\det L\oplus_G K = \frac{\det L\oplus K}{|G|^2} = \frac{d^2}{(d)^2}=1.
\end{equation}
So $L\oplus_G K$ is a unimodular lattice containing $L$ with orthogonal complement $L^\perp$ isomorphic to $K$.
\end{proof}

\subsection{Case 2: \texorpdfstring{$d$}{d} is even}

\begin{lemma}\label{lem: d even}
Let $L$ be an integral lattice of signature $+^r-^s$ with $\det(L)$ even and $\rank(\Delta(L))=\delta$.  Let $m=\max\{\delta+1,3\}$. Then $L$ embeds in a unimodular lattice of signature $+^{r+m}-^s$.
\end{lemma}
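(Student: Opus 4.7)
The plan is to imitate the proof of Lemma \ref{lem: d odd}: construct a positive-definite integral lattice $K$ of rank $m$ and determinant $d := (-1)^s\det L$ whose discriminant form is the negative of that of $L$, then glue $L\oplus K$ along the graph $G$ of a norm- and inner-product-negating isomorphism $\phi:\Delta(L)\to\Delta(K)$. Since $|G|=|d|$, the resulting lattice $L\oplus_G K$ has $|\det|=1$ and so is a unimodular lattice of signature $+^{r+m}-^s$ containing $L$. The gluing step is verbatim from the odd case, so the work lies in producing local forms $K_p$ whose non-unimodular parts agree with those of $L_p^{\mathrm{neg}}$ and which collectively satisfy the hypotheses of \cite[Theorem 11, Chapter 15]{ConwaySloane}.

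For each odd prime $p$, I would define $K_p$ exactly as in the proof of Lemma \ref{lem: d odd}: copy the Jordan blocks of scale $q>1$ from $L_p^{\mathrm{neg}}$ and fill in a $q=1$ block whose sign and dimension are chosen so that $\dim K_p = m$ and the determinant condition \eqref{eq: det condition} holds. These contributions to the left side of the oddity formula are even and are unaffected by the parity of $d$.

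The new step is constructing the $2$-adic form $K_2$. Match the Jordan blocks of $K_2$ at each scale $q = 2^k$, $k\geq 1$, to those of $L_2^{\mathrm{neg}}$, preserving types and oddities, so that the $2$-primary discriminant form of $K$ is the negative of that of $L$. This leaves a unimodular $q=1$ block of dimension $n_1 = m - \sum_{k\geq 1} n_{2^k}\geq m-\delta\geq 1$, since the $2$-primary part of $\Delta(L)$ is generated by at most $\delta$ elements. The sign $\epsilon_1$ is then forced by the $p=2$ determinant condition, and the type and oddity $t_1$ of the $q=1$ block must be chosen so that the Jordan blocks conditions \eqref{eq: Jc cond 1}--\eqref{eq: Jc cond 4} and the global oddity formula \eqref{eq: oddity formula} hold. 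When $n_1\geq 3$ one can take type I with $t_1$ any value mod $8$ of parity $n_1 \bmod 2$, which gives four admissible residues and ample freedom to hit the required class. For $n_1\in\{1,2\}$ a small case analysis using the allowed residues in \eqref{eq: Jc cond 2} and \eqref{eq: Jc cond 3}, together with sign walking in the abbreviated $2$-adic symbol of the full form, produces the needed $t_1$.

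Once all local forms are in place, \cite[Theorem 11, Chapter 15]{ConwaySloane} produces $K$, and the classification of discriminant forms supplies the norm-negating isomorphism $\phi$. Then $G = \{(x,\phi x)\}$ is totally isotropic in $\Delta(L\oplus K)$ and $L\oplus_G K$ is the desired unimodular lattice of signature $+^{r+m}-^s$ containing $L$, exactly as in the proof of Lemma \ref{lem: d odd}. The main obstacle is the $2$-adic step: unlike at odd primes, the Jordan blocks conditions are intricate and the oddity contributes a delicate constraint mod $8$, so when the unimodular dimension $n_1$ is small one has to argue carefully that the forced $\epsilon_1$ and the available residues for $t_1$ (possibly combined with sign walking in a compartment containing the $q=1$ block) suffice to meet the oddity formula.
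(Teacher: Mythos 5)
Your overall strategy --- build a positive definite $K$ of rank $m$ and determinant $(-1)^s\det L$ by prescribing local forms whose non-unimodular Jordan blocks match those of $L_p^{\mathrm{neg}}$, then glue $L\oplus K$ along the graph of a norm-negating isomorphism of discriminant groups --- is the same as the paper's, and your treatment of the odd primes and of the gluing step agrees with the paper's proof. The divergence, and the gap, is at $p=2$. The paper's proof does not attempt the general even-determinant case: it opens by invoking an SSF hypothesis (used in the proof though absent from the lemma statement), which forces the $2$-part of $\Delta(L)$ to be $(\Z/2\Z)^\alpha$ where $d=2^\alpha a$ with $a$ odd. Consequently $K_2$ can be taken to be a single type-I compartment $[1^{(\frac{a}{2})(m-\alpha)}2^{+\alpha}]_t$ of total dimension $m\geq 3$, for which the Jordan-block conditions \eqref{eq: Jc cond 1}--\eqref{eq: Jc cond 4} collapse to the single parity requirement $t\equiv m\pmod 2$; the oddity formula then just dictates the value of $t$, which is always admissible. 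The SSF reduction is also what makes ``negating the discriminant form at $2$'' harmless, since all inner products on $(\Z/2\Z)^\alpha$ lie in $\tfrac12\Z/\Z$.

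Your proposal instead tries to handle an arbitrary $2$-adic Jordan decomposition and correctly isolates the problem: when the residual unimodular block has dimension $n_1\in\{1,2\}$, its sign $\epsilon_1$ is forced by the determinant condition, \eqref{eq: Jc cond 2} and \eqref{eq: Jc cond 3} leave only two or a few admissible oddities, and the oddity formula imposes one specific residue mod $8$. You assert that ``a small case analysis \dots together with sign walking \dots produces the needed $t_1$,'' but this is exactly the crux and it is not carried out: sign walking requires a walk between the $q=1$ block and some other block, and whether one exists --- and whether the resulting $\pm4$ oddity shifts can be absorbed without destroying the match between the $2$-adic discriminant forms of $K$ and $L^{\mathrm{neg}}$ --- depends on the structure of $L_2$. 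As written, the hard case is claimed rather than proved. To close the gap you should either import the SSF reduction as the paper does (which guarantees a single compartment of dimension $m\geq3$, where only the parity condition survives), or actually carry out the $n_1\leq 2$ analysis, which is substantially more delicate than the rest of your argument.
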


\begin{proof}
Let $d:=(-1)^s\det(L)$. Observe first that the SSF assumption guarantees that $\Delta(L_2)$ (the $2$-Sylow subgroup of $\Delta(L)$) is $(\Z/2\Z)^\alpha$ where $d=2^\alpha a$ and $2\nmid a$.
This means that the natural $\Q_2/\Z_2$-valued norms and inner products in $\Delta(L_2)$ are in $\frac{1}{2}\Z_2$. In particular, negating norms and inner products in $\Delta(L_2)$ is trivial.

Defined the local forms $K_p$ as follows.

For odd $p\nmid d$, define $K_p$ by $1^{(\frac{d}{p})m}$.

For odd $p|d$ with $d=p^\alpha a$, define $K_p$ by the product of $q^{\epsilon_q n_q}$ where for $q=p^m>1$,  the term $q^{\epsilon_q n_q}$ matches that of $L_p^{neg}$, and where $1^{\epsilon_1 n_1}$ is chosen such that $\prod \epsilon_q =(\frac{a}{p})$ and $\sum n_q=m$.

Let $t\equiv m+\sum_{p\text{ odd}}p\mathrm{-excess}(K_p)$. Since $\sum_{p\text{ odd}}p\mathrm{-excess}(K_p)$ is even, $t$ will have the parity of $m$.
Define $K_2$ by the reduced $2$-adic symbol $[1^{\left(\frac{a}{2}\right)(m-\alpha)}2^{+ \alpha}]_t$ where $d=2^\alpha a$.
% $1^{\epsilon_1(m-1)}_{t_1}2^{\epsilon_2 1}_{t_2}$, and where $t_2$ matches that of the $L_2^{neg}$ ($f_2$ is necessarily of type II),  $t_1+t_2\equiv t\mod8$.

With these choices of local forms, all conditions \autoref{eq: det condition}-\autoref{eq: Jc cond 4} are satisfied. So there exist an integral lattice $K$ of signature $+^{m}-^0$ and determinant $d$ with the prescribed local forms.

Now each local form $K_p$ for $p\neq 2$ differs from $L_p^{neg}$ by a unimodular factor. We then have that for all $p$ (including $p=2$) $\Delta(K_p)$ and $\Delta(L_p^{neg})$ are isomorphic and correspond to the Sylow $p$-subgroups of $\Delta(K)$ and $\Delta(L^{neg})$. It follows that there exist a group isomorphism $\phi:\Delta(L)\rightarrow\Delta(K)$ which negates norms and inner products. Let $G=\{(x,\phi x)\}$ be the graph of $\phi$. Then $G$ is a totally isotropic subgroup of $\Delta(L\oplus K)=\Delta(L)\oplus \Delta(K)$. 

The remaining follows exactly as in the proof of \autoref{lem: d odd}.
\end{proof}

%%%%%%%%%%%%%%% %%%%%%%%%%%%%%% %%%%%%%%%%%%%%%

%%%%%%%%%%%%%%% %%%%%%%%%%%%%%% %%%%%%%%%%%%%%%

\section{Special subgroups}\label{sec: special}

In this section we gather the necessary ingredients and prove \autoref{th: special} as a direct consequence of \autoref{cor: aut embedding} and \autoref{ert}.

It can be shown that the automorphism group of a non-SSF lattice is always contained in the automorphism group of one which is SSF \cite{Watson1, Watson2,Allcock12}.
For SSF lattices of dimension up to $5$, the rank of their discriminant groups is at most $2$. Therefore, the following corollary follows from \autoref{th: unimodular embedding} since $L\otimes\R$ is the orthogonal complement to the positive subgroups $K\otimes \R$ in the vector space $(L\oplus_G K)\otimes \R$.

\begin{cor}\label{cor: aut embedding}
If $L$ is an integral lattice of signature $+^3-^1$ (resp. $+^4-^1$), then $\Aut(L)$ embeds as a geometrically finite subgroup of $\Aut(\mathbf{I}_{6,1})$ (resp. $\Aut(\mathbf{I}_{7,1})$) which preserves a copy of $\H^3$ (resp. $\H^4$) in $\H^6$ (resp $\H^7$).
\end{cor}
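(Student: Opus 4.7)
The plan is to invoke Proposition \ref{th: unimodular embedding} at the lattice level and then transport the automorphism group along the embedding. By the Watson-Allcock reduction recalled at the start of this section, I may assume $L$ is SSF, since $\Aut(L)$ is contained in $\Aut(L')$ for some SSF $L'$ of the same signature. For SSF $L$ of dimension $r+1 \in \{4,5\}$ the bound $\rank\Delta(L) \leq \tfrac{1}{2}\dim L$ gives $\delta \leq 2$, so $m = \max\{\delta+1,3\} = 3$. Proposition \ref{th: unimodular embedding} then produces a unimodular Lorentzian overlattice $M$ of signature $+^{r+3}-^1$; since $(r+3)-1 \in \{5,6\}$ is not divisible by $8$, the uniqueness statement for $\mathbf{I}_{n,1}$ recalled in the preliminaries identifies $M$ with the desired $\mathbf{I}_{r+3,1}$.

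Next I would promote this to an embedding of automorphism groups. Write $M = L \oplus_G K$ with $K = L^\perp \subset M$ the positive-definite orthogonal complement of rank $3$ constructed in Lemmas \ref{lem: d odd} and \ref{lem: d even}. An element $g \in \Aut(L)$ extends canonically to $L^*$, and paired with any $h \in \Aut(K)$ whose induced action on $\Delta(K)$ equals $\phi \bar g \phi^{-1}$, the isometry $(g,h)$ of $L^* \oplus K^*$ preserves the glue subgroup $G = \{(x,\phi x)\}$ and hence preserves $M$. This realizes $\Aut(L)$ -- or, at the very least, its finite-index subgroup $\Aut(L)^{\circ}$ acting trivially on $\Delta(L)$, for which one may simply take $h = \operatorname{id}$ -- inside $\Aut(M) = \Aut(\mathbf{I}_{r+3,1})$. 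Preservation of a totally geodesic $\H^r \subset \H^{r+3}$ is then immediate from the orthogonal splitting $M\otimes\R = (L\otimes\R) \oplus (K\otimes\R)$: since $K\otimes\R$ is positive definite, the negative cone of $L\otimes\R$ cuts out the required copy of $\H^r$, and the image of $\Aut(L)$ stabilizes it setwise by construction.

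Geometric finiteness then follows from arithmeticity: $\Aut(L)$ acts as a finite-covolume lattice on $\H^r$, and any discrete subgroup of $\Isom(\H^{r+3})$ that stabilizes and acts cofinitely on a totally geodesic hyperbolic subspace is geometrically finite in the ambient space. The step I expect to require the most care is the literal embedding of all of $\Aut(L)$ as opposed to only $\Aut(L)^{\circ}$: it asks that every permutation of $\Delta(L)$ arising from $\Aut(L)$ is realized via $\phi$ by an element of $\Aut(K)$ on $\Delta(K)$. This should be read off from the explicit local-form construction of $K$ in the proofs of Lemmas \ref{lem: d odd} and \ref{lem: d even}; passing to the finite-index subgroup $\Aut(L)^{\circ}$ is harmless for the applications to Theorem \ref{th: special}, so either route yields the corollary.
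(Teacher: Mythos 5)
Your proposal follows essentially the same route as the paper: reduce to SSF lattices via the Watson--Allcock results, note that $\dim L\leq 5$ forces $\rank(\Delta(L))\leq 2$ and hence $m=3$ in \autoref{th: unimodular embedding}, identify the resulting unimodular overlattice of signature $+^{r+3}-^1$ with $\mathbf{I}_{6,1}$ or $\mathbf{I}_{7,1}$, and read off the invariant totally geodesic $\H^3$ or $\H^4$ from the orthogonal splitting with the positive-definite complement $K$. You are in fact more explicit than the paper on the one delicate point---that extending an arbitrary $g\in\Aut(L)$ to the overlattice requires an element of $\Aut(K)$ inducing $\phi\bar g\phi^{-1}$ on $\Delta(K)$, which the paper asserts without comment (though its Section 5 example carries out exactly such a matching)---and your fallback to the finite-index subgroup acting trivially on $\Delta(L)$ is a reasonable hedge that does not affect the application to \autoref{th: special}.
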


The following theorem of Everitt-Ratcliffe-Tschantz provides the explicit relationship between the automorphism groups of the unimodular lattices $\mathbf{I}_{n,1}$ for $n\leq8$ and right-angled Coxeter groups. Recall that $q_n:=-x_0^2+ x_1^2 + \dots + x_n^2$ and $\Aut(\mathbf{I}_{n,1})=\mathrm{O}(q_n,\Z)$.

\begin{theorem}[{\cite[Theorem 2.1]{ERT}}] \label{ert}
For $2\leq n \leq 7$, $\mathrm{O}^+(q_n,\Z)_{(2)}$ is a geometric RACG. It is the reflection group of an all-right hyperbolic polyhedron of dimension $n$.
%\\ Furthermore, for $3\leq n\leq 7$, each side (codimension 1 face) is congruent to the polyhedron corresponding to $\mathrm{O}^+(F_{n-1},\Z)_{(2)}$.
The group $\mathrm{O}^+(q_8,\Z)_{(2)}$ contains a geometric RACG as a subgroup of index 2. This subgroup is the reflection group of an all-right hyperbolic polyhedron of dimension $8$.
\label{th: RACG}
\end{theorem}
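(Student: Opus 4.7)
The plan is to exhibit, for each $n$ in the range, an explicit all-right hyperbolic polytope $P_n\subset\H^n$ whose reflection group realizes $\mathrm{O}^+(q_n,\Z)_{(2)}$ (or its index-2 subgroup when $n=8$). Since $\mathrm{O}^+(q_n,\Z)$ is a reflection group for $n\leq 19$, the natural strategy is to identify which mirrors of the ambient Coxeter group descend to reflections that actually lie in the level-2 congruence subgroup.

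First I would classify admissible root vectors. A reflection $r_v\in\mathrm{O}(q_n,\Z)$ comes from a primitive integral vector $v$ with $(v,v)\in\{\pm 1,\pm 2\}$ and has matrix
\[
r_v = I - \tfrac{2}{(v,v)}\, v v^{T} J,\qquad J=\diag(-1,1,\dots,1).
\]
The condition $r_v\equiv I\pmod 2$ forces every entry of $\tfrac{2}{(v,v)}v_iv_j$ to be even, which selects precisely the norm $-1$ vectors $\pm e_0$ (giving diagonal $\pm 1$ matrices), the norm $+1$ vectors $\pm e_i$ ($i\geq 1$), and certain norm $\pm 2$ vectors whose coordinates satisfy prescribed parity constraints. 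Second, I would verify that two such mirrors either coincide, are disjoint, or meet at angle $\pi/2$ (i.e.\ their root vectors are orthogonal), so that the candidate generators produce a right-angled Coxeter system.

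Third, I would run Vinberg's algorithm on $\mathrm{O}^+(q_n,\Z)_{(2)}$, starting from a convenient basepoint (for instance the point stabilized by the obvious permutation-of-coordinate symmetries). At each stage Vinberg's algorithm selects the next mirror as the one whose root is closest to the basepoint under the induced weight; the restriction to level 2 amounts to discarding roots whose reflection is not $\equiv I\pmod 2$. One then checks that the algorithm terminates and outputs the polytope $P_n$, bounded by the mirrors from Step 1. Poincaré's polyhedron theorem together with the right-angle verification then yields a geometric right-angled Coxeter structure with $P_n$ as fundamental domain.

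The main obstacle is the final step: verifying that the reflection group so constructed is \emph{all} of $\mathrm{O}^+(q_n,\Z)_{(2)}$ (or index 2 therein when $n=8$), rather than merely a subgroup. I would handle this via a covolume comparison, using Schl\"afli-style formulas for right-angled polytopes to compute $\mathrm{vol}(P_n)$ and comparing to $[\,\mathrm{O}^+(q_n,\Z):\mathrm{O}^+(q_n,\Z)_{(2)}\,]\cdot\mathrm{vol}(\H^n/\mathrm{O}^+(q_n,\Z))$, where the covolume of the full automorphism group is read off from Vinberg's fundamental polytope for $\mathbf{I}_{n,1}$. For $n=8$ the additional subtlety is to exhibit explicitly the non-reflection involution generating the index-two extension; a natural candidate is a signed coordinate permutation already congruent to the identity mod 2.
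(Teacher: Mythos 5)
First, a point of comparison: the paper does not prove this statement at all --- it is quoted verbatim from Everitt--Ratcliffe--Tschantz \cite[Theorem 2.1]{ERT} and used as a black box, so there is no internal argument to measure yours against. Judged on its own terms, your outline follows the natural strategy (and essentially the one used in the literature): classify the reflections lying in the level-2 congruence subgroup, check that their mirrors meet only at right angles, and then prove that they generate the whole congruence subgroup. But the classification step, as written, contains concrete errors. The vector $\pm e_0$ has norm $-1$ and the corresponding reflection $x_0\mapsto -x_0$ fixes the hyperplane $x_0=0$, which misses the hyperboloid entirely and reverses time orientation; it is not a hyperbolic reflection and cannot be a generator of a geometric RACG. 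More importantly, your clause about ``certain norm $\pm 2$ vectors whose coordinates satisfy prescribed parity constraints'' is vacuous: for a norm-2 root $v$ the reflection is $x\mapsto x-(x,v)v$, whose $j$th diagonal entry is congruent to $1+v_j$ mod $2$, so $r_v\equiv I\pmod 2$ would force every coordinate of $v$ to be even, contradicting primitivity. The admissible roots are therefore exactly the norm-1 vectors, and for those the right-angle property is automatic, since two intersecting mirrors require the form to be positive definite on $\mathrm{span}(v,w)$, i.e.\ $(v,w)^2<(v,v)(w,w)=1$, forcing $(v,w)=0$. Getting this classification right is precisely what makes the right-angledness claim work, and your version of it does not.

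The more serious gap is the final step. A covolume comparison requires (i) the index $[\mathrm{O}^+(q_n,\Z):\mathrm{O}^+(q_n,\Z)_{(2)}]$, which is the order of the image of reduction mod $2$ --- itself a nontrivial computation since $q_n$ degenerates mod $2$ --- and (ii) the volume of the candidate all-right polytope $P_n$. In odd dimensions there is no Gauss--Bonnet shortcut, and Schl\"afli-type volume computations for right-angled polytopes in dimensions $5$ and $7$ are not something one can wave at; you give no indication of how either quantity would actually be obtained. Without that, you have only produced \emph{some} right-angled reflection subgroup of $\mathrm{O}^+(q_n,\Z)_{(2)}$, not an equality (or the index-2 statement when $n=8$). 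An alternative that avoids volumes, and is closer to what one actually does, is to use the normality of the congruence subgroup in $\mathrm{O}^+(q_n,\Z)$ together with Vinberg's known Coxeter polytope for $\mathbf{I}_{n,1}$ to produce explicit coset representatives and verify the index directly. As it stands your proposal is a plausible plan rather than a proof.
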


\begin{lemma}\label{lem: cong2 embeds}
Let $L$ be an integral lattice and $K$ the corresponding integral lattice with $L\oplus_G K$ unimodular as constructed in the proof of \autoref{th: unimodular embedding}. Then $\Aut(L)_{(2)}$ is contained in $\Aut( L\oplus_G K)_{(2)}$. 
\end{lemma}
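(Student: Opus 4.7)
The plan is to embed $\Aut(L)_{(2)}$ into $\Aut(L\oplus_G K)_{(2)}$ via the natural extension $g\mapsto \tilde g := g \oplus \mathrm{id}_K$, an isometry of the ambient quadratic space preserving the orthogonal sum $L\oplus K$. Since $g\equiv I \mod 2$ on $L$ and $\mathrm{id}_K$ is trivially so on $K$, the extension $\tilde g$ is congruent to the identity modulo $2$ on $L\oplus K$. It remains to verify that $\tilde g$ preserves the overlattice $L\oplus_G K$ and that it remains congruent to the identity mod $2$ there.

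For the preservation, $\tilde g$ acts on $\Delta(L\oplus K)=\Delta(L)\oplus\Delta(K)$ as the pair $(\bar g,\mathrm{id})$, so it stabilizes the totally isotropic glue subgroup $G=\{(x,\phi x):x\in\Delta(L)\}$ if and only if $\bar g$ acts trivially on $\Delta(L)$. The key step is to prove this triviality. Writing $g=I+2h$ with $h\in\mathrm{End}_\Z L$, the form-preservation identity $g^T A g = A$ (with $A$ the Gram matrix of $L$) expands and simplifies to $AhA^{-1}=-h^T(g^T)^{-1}\in M_n(\Z)$, so $h$ extends to a $\Z$-linear endomorphism of $L^*$. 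Consequently $(g-I)(v^*)=2h(v^*)\in 2L^*$ for every $v^*\in L^*$, and the induced map $\bar g - \mathrm{id}$ on $\Delta(L)=L^*/L$ has image in $2\Delta(L)$. Under the SSF hypothesis the $2$-part $\Delta(L)_2$ is $2$-torsion, hence $2\Delta(L)_2=0$ and $\bar g$ is trivial on the $2$-part; triviality on each odd-prime part $\Delta(L)_p$ would be established using that $\bar g$ is an isometry of the $\Q/\Z$-valued form on $\Delta(L)$ and preserves the primary decomposition, combined with the congruence condition on $g$.

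For the mod $2$ condition on $L\oplus_G K$, a representative $v=(u,w)\in L\oplus_G K$ gives $\tilde g v-v=(gu-u,0)$. Triviality of $\bar g$ yields $gu-u\in L$, while $(g-I)L^*\subset 2L^*$ gives $gu-u\in 2L^*$, so $gu-u\in 2L^*\cap L$. In Case 1 of \autoref{th: unimodular embedding} ($\det L$ odd, $\Delta(L)_2=0$), an elementary argument shows $2L^*\cap L=2L$, and so $\tilde g v-v\in 2L\oplus\{0\}\subset 2(L\oplus_G K)$. In Case 2 ($\det L$ even), $2L^*\cap L$ may strictly contain $2L$; one would modify the natural extension to $g\oplus g_K$ for a carefully chosen $g_K\in\Aut(K)_{(2)}$, exploiting the specific construction of $K$ in \autoref{th: unimodular embedding} to match the glue-compatibility on both coordinates.

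The assignment $g\mapsto\tilde g$ is clearly a group homomorphism and is injective (restriction to $L$ recovers $g$), yielding the desired inclusion $\Aut(L)_{(2)}\subset\Aut(L\oplus_G K)_{(2)}$. The main obstacle is the odd-part triviality of the $\bar g$-action on $\Delta(L)$, together with the Case~$2$ verification; both require exploiting fine structure of the constructed $K$ and the SSF hypothesis beyond the elementary form-preservation computation on $L$.
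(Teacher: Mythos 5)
Your argument reduces the whole lemma to the claim that every $g\in\Aut(L)_{(2)}$ acts trivially on $\Delta(L)$, so that $g\oplus\mathrm{id}_K$ stabilizes the glue group $G$. That claim is false, and it fails worst exactly in the case you declare settled, namely Case 1 (odd determinant). The central element $-I$ always lies in $\Aut(L)_{(2)}$ and acts on $\Delta(L)$ by $-1$, which is nontrivial as soon as $\Delta(L)$ contains an element of order greater than $2$. The paper's own example in \autoref{sec: example} makes this concrete: for $A_L=\diag\{-7,1,1,1\}$ one has $\Delta(L)\cong\Z/7\Z$, the elements $\gamma\in\Aut(L)$ with leading entry $a\equiv-1\bmod 7$ (including $-I_4\in\Aut(L)_{(2)}$) act by $-1$ on $\Delta(L)$, and the extension to $\mathbf{I}_{6,1}$ must be taken to be $\gamma\oplus\diag(-1,-1,1)$, not $\gamma\oplus I_3$. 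Your computation $(g-I)L^*\subseteq 2L^*$ is correct and does force $\bar g=\mathrm{id}$ on the $2$-primary part of $\Delta(L)$, but on an odd-primary component $\Delta(L)_p$ one has $2\Delta(L)_p=\Delta(L)_p$, so the containment carries no information; the step you defer (``triviality on each odd-prime part would be established\dots'') cannot be carried out because the assertion is false. Since in Case 1 the group $\Delta(L)$ is entirely of odd order, your argument proves nothing there, and the ``carefully chosen $g_K$'' you reserve for Case 2 is in fact needed in both cases: one must extend by some $g_K\in\Aut(K)$ inducing $\phi\,\bar g\,\phi^{-1}$ on $\Delta(K)$, and the existence of such a $g_K$ is itself a point requiring argument.

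The paper's proof proceeds quite differently and never attempts to show that $\bar g$ is trivial. For $u\in L\oplus_G K$ with $mu=u'\in L\oplus K$ minimal, it uses invariance of the inner product to get $(u'\gamma-u',v)\equiv 0\bmod 2m$ for the relevant $v\in L\oplus K$, concludes $u'\gamma\equiv u'\bmod 2m$, and divides by $m$ to obtain $u\gamma\equiv u\bmod 2$. That inner-product congruence, not discriminant-form bookkeeping, is the mechanism that upgrades ``congruent to the identity mod $2$ on $L\oplus K$'' to the same statement on the overlattice; you should rebuild your argument around it (or around an explicit choice of $g_K$ as in \autoref{sec: example}) rather than around triviality of the action on $\Delta(L)$.
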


\begin{proof}
Recall that $K$ was constructed so that $\Delta(L)\cong\Delta(K)\cong G$. An element $\gamma\in\Aut(L)$ is in $\Aut(L)_{(2)}$ if and only if for any $v\in L$, $v\gamma=v+2v'$ where $v'$ is some other element in $L$, or equivalently, $v\gamma\equiv v\mod2$. Similarly, $\gamma\in\Aut(L\oplus_G K)$ if and only if for any $v\in L\oplus_G K$, $v\gamma\equiv v\mod2$. If $v\in L\oplus K$ then clearly for $\gamma\in Aut(L)$, $v\gamma\equiv v\mod2$.

For the remainder of the proof, set $\gamma\in Aut(L)_{(2)}$ and $u\in L\oplus_G K - L\oplus K$. Suppose $m$ is the smallest positive integer so that $mu=u'\in L\oplus K$.
Then $u'\gamma=u'+2x$ where $x\in L\oplus K$.
%Then $u=u_L+u_K$ and $mu=mu_L+mu_K$ where $u_L\in L^*-L$, $u_K\in K^*-K$m $mu_L\in L$, and $mu_K\in K$. 
If $v$ is some other element of $L\oplus K$ with $u'\perp v$, then $v\gamma=v+2y$ for some $y\in L\oplus K$ and since $0=(u',v)=(u'\gamma,v\gamma)$,
\begin{equation*}
 0=(u'\gamma,v+2y)=(u'\gamma,v)+(mu\gamma,2y)=(u'\gamma,v)+2m(u\gamma,y).
\end{equation*}
Thus $(u'\gamma,v)\equiv 0\mod 2m$. Also,
\begin{equation*}
(u'\gamma,u')=(u'+2x,u')=(u',u')+(2x,mu)=(u',u')+2m(x,u).
\end{equation*}
Thus $(u'\gamma,u')\equiv (u',u')\mod 2m$. Therefore $u'\gamma\equiv u'\mod 2m$ and since $u'=mu$ we have $u\gamma\equiv u\mod 2$. This shows $\gamma\in\Aut(L\oplus_G K)_{(2)}$.
\end{proof}

\begin{proof}[Proof of \autoref{th: special}]
The proof follows from \autoref{cor: aut embedding}, \autoref{th: RACG}, and \autoref{lem: cong2 embeds} since the lattice $L$ embeds as the orthogonal complement of the positive definite sublattice $K$ and this induces the embedding of $\Aut(L)$ as a geometrically finite subgroup of $\Aut(L\oplus_G K)$.
\end{proof}

We remark here that the principal congruence subgroups of level 2 have uniformly bounded index in integral arithmetic subgroups of simplest type in $\Isom(H^n)$. Indeed, any such $\Gamma$ is contained in some $\mathrm{O}^+(f;\Z)\subset \GL_{n+1}(\Z)$ and thus $[\Gamma:\Gamma_{(2)}]\leq |\GL_{n+1}(\Z/2\Z)|$.

The Bianchi group $\PSL_2(\mathcal{O}_d)$ is contained in $\mathrm{O}^+(f_d;\Z)$ where
$$ f_d = \begin{cases}
2x_0x_1+2x_2^2+2dx_3^2 & \text{if } d\equiv 1,2\mod 4\\
2x_0x_1+2x_2^2+2x_2x_3+\frac{d+1}{2} x_3^2 & \text{if } d\equiv 3\mod 4 
\end{cases} $$
\cite{JM}. It is easy to see via the explicit embedding of $\PSL_2(\mathcal{O}_d)$ into $\mathrm{O}^+(f_d;\Z)$ described in \cite[\S 3.1]{bianchi} that the image of $\PSL_2(\mathcal{O}_d)_{(2)}$ lands in $\mathrm{O}^+(f_d;\Z)_{(2)}$. Therefore, we get that the principal congruence subgroup $\PSL(2,\mathcal{O}_d)_{(2)}$ embeds in a RACG. The index of this subgroup is well known to be \begin{equation*}
[\PSL(2,\mathcal{O}_d): \PSL(2,\mathcal{O}_d)_{(2)}]= \begin{cases}
	48 & \text{if }d\equiv 1,2 \mod (4) \\
	60 & \text{if }d\equiv 3 \mod (8) \\
	36 & \text{if }d\equiv 7 \mod (8),
\end{cases}
\end{equation*}
(see for example \cite{Dickson}), and thus, \autoref{prop: bianchi} follows.

%%%%%%%%%%%%%%% %%%%%%%%%%%%%%% %%%%%%%%%%%%%%%

%%%%%%%%%%%%%%% %%%%%%%%%%%%%%% %%%%%%%%%%%%%%%

\section{An example}\label{sec: example}

Consider the lattice $L$ with basis $\{e_0,e_1,e_2,e_3\}$ and associated inner product matrix $ A_L=\diag\{-7,1,1,1\}$.
The dual $L^*$ has $\Z$-basis $\{\frac{1}{7}e_0,e_1,e_2,e_3\}$ and the discriminant group $\Delta(L)\cong\Z/7\Z$ is generated  by the image of the vector $\frac{1}{7}e_0$. 
We can take $K$ to be the lattice with basis $\{e_4,e_5,e_6\}$ and associated inner product matrix$A_K=\diag\{7,1,1\}$.
The dual $K^*$ has $\Z$-basis $\{\frac{1}{7}e_4,e_5,e_6\}$ and the discriminant group $\Delta(K)\cong\Z/7\Z$ is generated  by the image of the vector $\frac{1}{7}e_4$. Therefore $\Delta(L\oplus K)=\Z/7\Z\times \Z/7\Z$ and image of the vector $u=\frac{4}{7}e_0+\frac{3}{7}e_4$ generates a totally isotropic subgroup $G$ of $\Delta(L\oplus K)$ of order $\Z/7\Z$. Indeed, $u$ has norm $-1$ and pairs integrally with $L\oplus K$. If we let $v=\frac{3}{7}e_0+\frac{4}{7}e_4$ then $\{u,v,e_1,e_2,e_3,e_5,e_6\}$ defines an orthogonal $\Z$-basis for $L\oplus_G K$ with associated inner product matrix $\diag\{-1,1,1,1,1,1,1\}$. Therefore $L\oplus_G K = \mathbf{I}_{6,1}$.

Since $e_0=4u-3v$, a copy of $L$ is the sublattice with $\Z$-basis $\{4u-3v,e_1,e_2,e_3\}$ which is the orthogonal complement of a copy of $K$ with $\Z$-basis $\{-3u+4v,e_5,e_6\}$. The change of basis matrix is
\begin{equation} B= \begin{pmatrix}
\frac{4}{7}& 0 & \frac{-3}{7} & \\ 0 & I_3 & 0 & \\
\frac{-3}{7} & 0 & \frac{4}{7} & \\ & & & I_2
\end{pmatrix} \end{equation}
where $I_n$ denotes the $n\times n$ identity matrix. Therefore, $$B\cdot\diag\{-7,1,1,1,7,1,1\}\cdot B^{tr}=\diag\{-1,1,1,1,1,1,1\}.$$

Let $\gamma$ be in $\Aut(L)$. As a matrix with entries in $\Z$ preserving $A_L$,  $\gamma$ the first row $\begin{pmatrix} a&b&c&d \end{pmatrix}$ satisfies $a^2\equiv1\mod 7$ and $b,c,d\equiv1\mod7$. We extend $\gamma$ in $\Aut_L$ to $\gamma$ in $\Aut(L\oplus K)$ as follows:
\begin{equation}
\gamma \mapsto \begin{cases}
\begin{pmatrix} \gamma & \\ & I_3 \end{pmatrix} & \text{if }a\equiv 1\mod7 \\
\begin{pmatrix} \gamma & & \\ & -I_2 &  \\ & & I_1 \end{pmatrix} & \text{if }a\equiv -1\mod7
\end{cases}
\end{equation}
and such an integral matrix preserves integrality when conjugated by $B$, that is, $B\cdot\gamma\cdot B^{-1}$ is integral and preserves $\diag\{-1,1,1,1,1,1,1\}$. Indeed, if
$$ \gamma_1 = \begin{pmatrix}
7 a+1 & 7 b & 7 c & 7 d & \\
 e & f & g & h & \\
 i & j & k & l & \\
 m & n & o & p & \\
 & & & & I_3 
\end{pmatrix}
\text{, } \gamma_2 = \begin{pmatrix}
7 a-1 & 7 b & 7 c & 7 d & & \\
 e & f & g & h & & \\
 i & j & k & l & & \\
 m & n & o & p & & \\
 & & & & -I_2 & \\
 & & & & & I_1
\end{pmatrix} $$
then 
$$ B\cdot\gamma_1\cdot B^{-1} = \begin{pmatrix}
16 a+1 & 4 b & 4 c & 4 d & 12 a & 0 & 0 \\
 4 e & f & g & h & 3 e & 0 & 0 \\
 4 i & j & k & l & 3 i & 0 & 0 \\
 4 m & n & o & p & 3 m & 0 & 0 \\
 -12 a & -3 b & -3 c & -3 d & 1-9 a & 0 & 0 \\
 0 & 0 & 0 & 0 & 0 & 1 & 0 \\
 0 & 0 & 0 & 0 & 0 & 0 & 1 
\end{pmatrix} $$
and
$$ B\cdot\gamma_2\cdot B^{-1} = \begin{pmatrix}
16 a-1 & 4 b & 4 c & 4 d & 12 a & 0 & 0 \\
 4 e & f & g & h & 3 e & 0 & 0 \\
 4 i & j & k & l & 3 i & 0 & 0 \\
 4 m & n & o & p & 3 m & 0 & 0 \\
 -12 a & -3 b & -3 c & -3 d & -9 a-1 & 0 & 0 \\
 0 & 0 & 0 & 0 & 0 & -1 & 0 \\
 0 & 0 & 0 & 0 & 0 & 0 & 1 
\end{pmatrix} . $$

Furthermore, If $\gamma\equiv I_7\mod2$ entrywise, so is $B\cdot\gamma\cdot B^{-1}$, and thus there is an inclusion of principal congruence subgroups of level $2$: $\Aut(L)_{(2)}\subset\Aut(\mathbf{I}_{6,1})_{(2)}$.

%%%%%%%%%%%%%%% %%%%%%%%%%%%%%% %%%%%%%%%%%%%%%

%%%%%%%%%%%%%%% %%%%%%%%%%%%%%% %%%%%%%%%%%%%%%

\bibliographystyle{alpha}
\bibliography{bib}

\end{document}